\newtheorem{con}{\bf Conjecture}
\newtheorem{lem}{\bf Lemma}
\newtheorem{thm}{\bf Theorem}
\newtheorem{corr}{\bf Corollary}
\newcommand{\p}{\mathbb{P}}
\newcommand{\input{fulltitle}}{\input{fulltitle}}
\newcommand{\input{shorttitle}}{\input{shorttitle}}
\begin{document}

\title[On a property of random-oriented percolation in a quadrant] 
{On a property of random-oriented percolation in a quadrant}

\author{Dmitry Zhelezov}
\thanks{Department of Mathematical Sciences, 
Chalmers University Of Technology and University of Gothenburg} 
\address{Department of Mathematical Sciences, 
Chalmers University Of Technology and University of Gothenburg,
41296 Gothenburg, Sweden} \email{zhelezov@chalmers.se}

\subjclass[2000]{60K35 (primary).} \keywords{percolation, random orientations, phase transition}

\date{\today}

\begin{abstract}
	Grimmett's random-orientation percolation is formulated as follows. The square lattice is used to generate an oriented graph such that each edge is oriented rightwards (resp. upwards) with probability $p$ and  leftwards (resp. downwards) otherwise. We consider a variation of Grimmett's model proposed by Hegarty, in which edges are oriented away from the origin with probability $p$, and towards it with probability $1-p$, which implies rotational instead of translational symmetry. We show that both models could be considered as special cases of random-oriented percolation in the NE-quadrant, 	provided that the critical value for the latter is $\frac{1}{2}$. As a corollary, we unconditionally obtain a non-trivial lower bound for	the critical value of Hegarty's random-orientation model. The second part of the paper is devoted to higher dimensions and we show that the	Grimmett model percolates in any slab of height at least $3$ in $\mathbb{Z}^3$.   
\end{abstract}

\maketitle

\section{Introduction}
Random-oriented percolation was first introduced by G. Grimmett \cite{Gr1} and is defined as follows. Consider the square lattice $\mathbb{Z}^2$ and let each vertical edge be directed upwards with probability $p \in [0, 1]$ and downwards otherwise. Analogously, each horizontal edge is directed rightwards with probability $p$ and leftwards otherwise. Let $\theta_G(p)$ be the probability that there is a directed path from the origin to infinity. By coupling with the classical bond percolation, it is not hard to show that $\theta_G(\frac{1}{2}) = 0$ using, for example, Lemma~2.1 in \cite{Lin1}. There is also the obvious symmetry 
$\theta_G(p) = \theta_G(1-p)$, so it is natural to ask if $\theta_G(p) > 0$ for $p \neq 1/2$. This conjecture was raised by Grimmett \cite{Gr1} for the first time. The most significant advance so far was also made by Grimmett in \cite{Gr2} where he showed that percolation does occur if one adds a positive density of randomly directed arcs, so that the total probability of a directed arc being present is greater than one. Also, W. Xianyuan \cite{Xia1} proved the uniqueness of the infinite cluster in the supercritical phase. He also conjectured that $\theta_G(p)$ is strictly monotone on $[1/2, 1]$, an obviously much stronger version of Grimmett's conjecture. Both conjectures seem to be far from resolution. 

In this note we consider a slightly different model, proposed by P. Hegarty on MathOverflow \cite{Heg1}, which we will refer as the $H$-model hereafter. It is defined as follows: for each edge $e$ of the integer lattice $\mathbb{Z}^2$ assign a direction away from the origin with probability $p$, and towards the origin otherwise. We say that a directed edge from $x$ to $y$ is oriented {\em inwards} if $\|x\| > \|y\|$, {\em outwards} otherwise, with the usual Euclidean norm. We denote by $\theta_H(p)$ the corresponding probability that there exists an infinite directed path from the origin. Note that this model coincides with the one proposed by Grimmett if we consider percolation only in the North-East quadrant. So, we will use $\theta_{NE}(p)$ for the percolation probability of the latter without abuse of notation. We conjecture that the NE quadrant is big enough for 
random-oriented percolation to occur.  
\begin{con} \label{con:NE}
 For random-oriented percolation in the North-East quadrant, $\theta_{NE}(p) > 0$ for all $p > \frac{1}{2}$.
\end{con}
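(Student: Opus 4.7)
The plan is to establish $\theta_{NE}(p) > 0$ for $p > 1/2$ via a block-renormalisation argument, reducing the problem to stochastic comparison with a supercritical oriented bond-percolation process on a coarser lattice. Fix $p > 1/2$. The hope is that at a sufficiently large scale, the NE-quadrant model dominates supercritical oriented bond percolation, so that well-understood renormalisation machinery yields the desired infinite directed cluster.

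First I would isolate a family of increasing ``good block'' events. For $N \geq 1$ let $B_N = [0,N]^2 \cap \mathbb{Z}^2$, and define $E_N$ to be the event that there exist directed paths inside $B_N$ from the bottom-left corner to a prescribed segment of the top edge and to a prescribed segment of the right edge, with enough geometric constraints to permit concatenation of paths in adjacent translates of $B_N$. Being increasing in the ``rightward/upward'' partial order on edge orientations makes $E_N$ compatible with FKG and with standard coupling arguments, and independence of non-overlapping boxes is automatic, since the orientations of disjoint edge sets are independent.

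The second and hardest step is to show $\p_p(E_N) \to 1$ as $N \to \infty$. This is where the real obstacle lies: the statement is essentially an RSW-type theorem for random-oriented percolation, and the classical RSW toolkit (self-duality, colour-swapping, reflection positivity) does not obviously transfer to this directed setting. One possible line of attack is to exploit the $y = x$ diagonal symmetry of the quadrant together with monotonicity in $p$, combined with an exploration of the leftmost directed path from the origin in the spirit of Grimmett~\cite{Gr2}, so as to produce quadrant crossings of uniformly positive probability; these could then potentially be boosted to crossing probability tending to $1$ via Menshikov-type differential inequalities adapted to the NE-quadrant geometry.

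Assuming the crossing statement is in hand, the final step is standard: tile the NE quadrant by disjoint boxes of side $N$, observe that the good-block events for distinct blocks depend on disjoint edge sets and are therefore independent, and choose $N$ large enough that the good-block probability exceeds the critical threshold for oriented site percolation on $\mathbb{Z}_{\geq 0}^2$. A Peierls-style block argument (as in the derivation of Theorem~7.61 in Grimmett's monograph) then produces an infinite directed cluster of good blocks, which concatenates to an infinite directed path in the original NE-quadrant model, establishing $\theta_{NE}(p) > 0$. Since any such bound would, in view of the forthcoming reductions, immediately settle Grimmett's long-standing conjecture for $\theta_G$, the crossing step should be expected to be at least as difficult as the original problem, and it is in this step that genuine new ideas are required.
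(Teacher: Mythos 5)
This statement is a \emph{conjecture} in the paper, not a theorem: the author explicitly leaves it open (even the weaker implication $\theta_G(p)>0 \Rightarrow \theta_{NE}(p)>0$ is stated as something he was unable to prove). Your proposal is likewise not a proof, and you say as much yourself: the entire content of the conjecture is concentrated in your second step, the claim that the good-block (crossing) probability $\p_p(E_N)$ can be made large, and for that step you offer only a wish list (RSW-type ideas, Menshikov-type differential inequalities, exploration of leftmost paths) with no argument. Concatenating supercritical blocks once one has uniformly good crossings is indeed routine; producing the crossings is exactly the open problem, so the proposal has a genuine, unfilled gap rather than a new route to the result.

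There is also a concrete error in the setup. The event $E_N$ (existence of directed open paths in a box of the random-orientation model) is \emph{not} increasing in the rightward/upward partial order on orientations: a directed path in this model may traverse edges oriented leftwards or downwards, and flipping such an edge to the ``higher'' orientation destroys that path. This is precisely the obstruction the paper emphasizes --- connectivity events here are not monotone, so Harris--FKG is unavailable (only Reimer-type inequalities survive). If instead you restrict $E_N$ to paths using only rightward/upward steps, the events become increasing but the model degenerates to ordinary oriented percolation with parameter $p$, which is subcritical for $\tfrac12 < p < \vec{p}_c$, so the block probabilities cannot tend to $1$ in the range you need. Either way, the FKG/monotonicity ingredient your renormalisation scheme leans on is not legitimately available, and the crossing estimate it was meant to support remains the heart of the unresolved conjecture.
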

It might be possible to prove a weaker result that $\theta_G(p) > 0$ implies $\theta_{NE}(p) > 0$, but we were unable to do so. The analogous result is well known for ordinary bond percolation in two dimensions and may be proven, for example, using RSW theory.  

It is not hard to show, using standard circuit-counting arguments, that $\theta_H(p) = 0$ for $p < \frac{1}{\mu^2}$, where $\mu$ is the connective constant of the square lattice. From the other side, $\theta_H(p) > 0$ for $p > \vec{p}_c$ due to coupling with oriented percolation with the critical probability $\vec{p}_c$. It is proved that $\vec{p}_c < 0.6735$, \cite{BBS}, and believed that $\vec{p}_c \approx 0.6447$. The main question that arises is whether we observe critical phenomena for the $H$-model and if so, what is the critical probability? Unfortunately, such a property appears to be very hard to establish.   
\begin{con} \label{con:monotonicity}
 The probability function $\theta_H(p)$ is strictly monotone in $[0, 1]$.
\end{con}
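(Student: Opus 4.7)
The plan is to attempt to reduce Conjecture 2 to a Russo-type argument, first establishing non-strict monotonicity via a coupling with an auxiliary outward-only event, and then upgrading via a pivotal-edge count.

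For each edge $e$ of $\mathbb{Z}^2$ introduce $X_e \in \{0,1\}$ equal to $1$ when $e$ is oriented outward (away from the origin), so $(X_e)_e$ is an i.i.d.~Bernoulli$(p)$ family and $\theta_H(p) = \mathbb{P}_p(A)$, where $A = \{\text{infinite directed path from }0\}$. Unlike in classical bond percolation, $A$ is not an increasing event in $(X_e)$: a directed path may traverse some edge $e$ in its inward direction, requiring $X_e = 0$, so raising $p$ could destroy it. Let $A^* \subseteq A$ be the sub-event that such a path exists using only outward edges; $A^*$ is increasing, so $p \mapsto \mathbb{P}_p(A^*)$ is automatically monotone by the standard uniform coupling.

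Step 1 is to prove $\mathbb{P}_p(A) = \mathbb{P}_p(A^*)$. Heuristically, on $A$ one extracts the subsequence of record-norm vertices $v_{i_0}=0, v_{i_1}, v_{i_2}, \ldots$ along the path, a strictly norm-increasing sequence whose consecutive members are joined by directed sub-paths; one then argues that the existence of such linkages at arbitrarily large scales almost surely forces the existence of an outward-only infinite directed path from $0$. Granted that reduction, Step 2 is strict monotonicity: apply Russo's formula to the monotone event $A^*$,
$$\frac{d}{dp}\mathbb{P}_p(A^*) = \sum_e \mathbb{P}_p(e \text{ pivotal for } A^*),$$
and combine with a finite-energy modification forcing some edge incident to $0$ to be pivotal with positive probability for every $p \in (0,1)$.

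The principal obstacle is Step 1. An infinite directed path in the H-model may contain arbitrarily many inward edges even far from the origin, via local detours such as $(r{+}2,0) \to (r{+}1,0) \to (r{+}1,1) \to (r{+}2,1) \to \cdots$, so there is no naive finite-surgery recipe for converting an arbitrary path into an outward-only one. The equality $\mathbb{P}_p(A) = \mathbb{P}_p(A^*)$ is in fact probably false: locally, outward-only paths resemble oriented percolation in a half-plane, whose critical value $\vec{p}_c > 1/2$ exceeds the conjectural range in which $\theta_H$ is positive. If the equality fails, one is forced to work with the two-sided Russo identity
$$\theta_H'(p) = \sum_e \bigl[\mathbb{P}_p(e \text{ outward-pivotal}) - \mathbb{P}_p(e \text{ inward-pivotal})\bigr],$$
and showing that the positive contributions strictly dominate the negative ones appears essentially as hard as the conjecture itself, which is presumably the reason the author records this only as a conjecture.
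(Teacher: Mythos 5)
This statement is one of the paper's open conjectures: the paper offers no proof of it, and in the Conclusion the author explicitly identifies the very obstruction you run into --- connectivity events in random-orientation models are not increasing, so Harris--FKG/Russo machinery does not apply directly --- as the reason the conjecture is left open. So there is no argument in the paper to compare yours against, and your proposal does not close the gap either; by your own admission its load-bearing step is unproved.

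Concretely, Step 1, the claimed equality $\mathbb{P}_p(A)=\mathbb{P}_p(A^*)$, is not only unsubstantiated but is in tension with the rest of the paper. The event $A^*$ (an infinite directed path from the origin using only outward edges) is exactly the coupling with oriented percolation that the paper already uses: within a quadrant, ``outward only'' means N/E steps only, so $\mathbb{P}_p(A^*)=0$ for $p$ below (roughly) $\vec{p}_c>0.6$, while Theorem~1 together with Conjecture~1 would put the critical value of $\theta_H$ at $\tfrac12$. Hence the equality would contradict the expected picture, and your ``record-norm vertices'' surgery cannot work: the directed sub-path between consecutive norm records genuinely needs inward edges (your own detour example), and there is no local, measure-preserving way to replace it by an outward-only linkage. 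The fallback, a two-sided Russo identity $\theta_H'(p)=\sum_e[\mathbb{P}_p(e\ \text{outward-pivotal})-\mathbb{P}_p(e\ \text{inward-pivotal})]$, has additional problems you do not address: $A$ depends on infinitely many edges, so Russo's formula only applies to finite-box approximations and one must control the exchange of limit and derivative (differentiability of $\theta_H$ is itself unknown), and there is no known tool --- precisely because of the failure of FKG-type inequalities here --- to show the positive pivotal terms dominate the negative ones. Note also that, as stated, strict monotonicity on all of $[0,1]$ could never follow from monotone-coupling of an increasing surrogate event alone, since $\theta_H(p)=0$ on an interval near $0$ by the path-counting bound. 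In short: the approach founders exactly where the author says all known approaches founder, and no new idea is supplied to get past it.
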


In this note we prove 
\begin{thm} \label{prop:main}
 Suppose $\theta_{NE}(1-p) > 0$. Then $\theta_H(p) = 0$. 
\end{thm}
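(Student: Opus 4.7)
The plan is to recast the hypothesis $\theta_{NE}(1-p) > 0$ inside the $H$-model at parameter $p$. The NE-quadrant restriction of the $H$-model($p$) is exactly NE-Grimmett($p$), since in both an edge is directed rightward, respectively upward (i.e.\ away from the origin) with probability $p$. Flipping every orientation converts NE-Grimmett($p$) into NE-Grimmett($1-p$) in distribution and interchanges the two events \emph{infinite directed path from the origin} and \emph{infinite directed path terminating at the origin}; hence the assumption yields that in the $H$-model($p$), the NE quadrant contains an infinite directed path into the origin with probability $\theta_{NE}(1-p) > 0$. The four-fold rotational symmetry of the $H$-model produces the analogous incoming-path event in each of the other three quadrants with the same positive probability.

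The heart of the argument is then to show that such incoming paths preclude an infinite outgoing directed path from the origin. Two structural features of the $H$-model drive this. First, each edge has a unique orientation, which is either strictly outward from or strictly inward toward the origin; two directed paths can share an edge only if they agree on its direction. Second, each infinite incoming path acts as a planar curve joining infinity to the origin within its quadrant, and together the four incoming paths form a four-armed spider meeting at the origin and escaping to infinity in each quadrant. A planarity and case analysis, using that an outgoing path from the origin must eventually wander off into one of the four quadrants and therefore cross one of the spider's legs, would give that any configuration supporting the full incoming spider admits no infinite outgoing directed path from the origin. A renormalization and Borel--Cantelli step across disjoint annular sectors, in which the incoming-path events are independent because they depend on disjoint edge sets, then upgrades the positive-probability blocking to almost-sure blocking and yields $\theta_H(p)=0$.

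The principal obstacle will be the planarity step: two directed paths within a single quadrant need not share a vertex even when one goes ``outward'' and the other ``inward,'' so one cannot derive the required contradiction from a one-quadrant obstruction alone. The full argument must exploit the simultaneous presence of incoming paths in all four quadrants together with the $H$-model's global radial orientation rule, and must also handle the coordinate-axis edges that are shared between adjacent quadrants (so that the four quadrant events are only approximately independent) --- a minor issue that I would resolve either by passing to strict quadrant interiors or via a short FKG-style correlation estimate between the four sets of ``inward'' edges.
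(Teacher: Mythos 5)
Your opening reduction is fine: reversing all orientations shows that $\theta_{NE}(1-p)>0$ is equivalent to the NE quadrant of the $H$-model at parameter $p$ containing an infinite directed path \emph{into} the origin with positive probability, and the fourfold symmetry gives the same in each quadrant. But the heart of your argument --- that a four-armed ``spider'' of incoming primal paths precludes an infinite outgoing path --- does not work, and your own closing paragraph already points at the reason without resolving it. An incoming and an outgoing directed path in the primal lattice can meet or cross at a vertex with no conflict whatsoever, since they use different edges at that vertex; edge-disjointness is all that orientation uniqueness gives you, and a planar crossing of vertex-sharing paths is not a contradiction. So no case analysis on primal paths alone can produce the obstruction. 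The blocking object must live in the dual: the paper defines dual lattices whose edge orientations are the primal ones rotated by $\pm\pi/2$, so that a \emph{directed dual circuit} surrounding the origin forces every primal edge it crosses to point inward, and hence genuinely blocks every outgoing primal path. The entire proof is then about producing such closed dual circuits in infinitely many disjoint annuli with probability bounded below, followed by Borel--Cantelli.

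The two steps you gloss over are exactly where the work lies. First, your events concern paths reaching the origin and are not measurable in an annulus; localizing them (so that events in disjoint frames are independent) needs an argument --- this is the paper's Lemma 2 and Corollary 1, which show the dual arm from $(x,1/2)$ to the $y$-axis can be made to avoid any fixed box $\Lambda_n$ at essentially no cost, and whose proof in turn uses Lemma 1 ($\theta_{NE}(1-p)>0$ forces $\theta_{NE}(p)=0$). Second, and most importantly, the four quarter-arcs (one per quadrant, joined across the axes at cost $1-p$ per crossing) must be closed into a single circuit, and this cannot be done by any Harris--FKG-type inequality: connectivity events in randomly oriented lattices are not increasing, and the unavailability of FKG is precisely the known obstruction in this model, as the paper emphasizes. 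Your proposed ``short FKG-style correlation estimate'' is therefore not an available tool. The paper instead closes the circuit by conditioning on the innermost (or outermost) almost-closed path and playing the two oppositely rotated dual lattices against each other via rotational symmetry (Lemma 5); this is the key technical idea missing from your sketch, and without it (or a substitute of comparable strength) the argument does not go through.
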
 

 Together with Conjecture~\ref{con:NE}, this would imply that for the $H$-model the critical probability 
 does exist and it is equal to $\frac{1}{2}$. Also, we get the following result unconditionally. 
 
\begin{corr} 
 $\theta_H(p) = 0$ for $0 < p < 1 - \vec{p}_c$. 
\end{corr}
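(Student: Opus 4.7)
The plan is to apply Theorem~\ref{prop:main} once I have verified its hypothesis $\theta_{NE}(1-p) > 0$ on the interval $p \in (0, 1-\vec{p}_c)$. This reduces the corollary to establishing the single estimate $\theta_{NE}(q) > 0$ for every $q > \vec{p}_c$, after which I plug in $q = 1-p$.

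To prove that estimate I would invoke the standard coupling between the Grimmett NE-quadrant model and oriented bond percolation on $\mathbb{Z}^2$. In the NE quadrant at parameter $q$, each edge is independently N/E-oriented with probability $q$; declaring the N/E-oriented edges \emph{open} produces an i.i.d.\ Bernoulli$(q)$ oriented bond-percolation configuration on the NE quadrant in which every open edge carries its canonical N-or-E orientation. An infinite N/E-oriented path from the origin in the Grimmett NE model is then literally the same combinatorial object as an infinite directed open path from the origin in the coupled oriented percolation, and the N-or-E nature of every step forces the open cluster of the origin to stay inside the NE quadrant, so no restriction is lost. Because oriented bond percolation on $\mathbb{Z}^2$ percolates from the origin for every $q > \vec{p}_c$, this identification supplies $\theta_{NE}(q) > 0$ throughout that range.

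Plugging in $q = 1-p$ for $p \in (0, 1-\vec{p}_c)$ then yields $\theta_{NE}(1-p) > 0$, and Theorem~\ref{prop:main} immediately delivers $\theta_H(p) = 0$, which is the corollary. I do not foresee a genuine obstacle here: the substance of the argument lies in Theorem~\ref{prop:main}, and the only new ingredient in my plan is the coupling identification above, which amounts to relabelling N/E-oriented edges as open (and S/W-oriented edges as closed) and observing that the resulting open cluster of the origin is automatically confined to the NE quadrant.
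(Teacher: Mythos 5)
Your proposal is correct and is exactly the argument the paper intends: the introduction already records that the NE-quadrant model dominates oriented percolation (any infinite N/E-open directed path is a directed path in the randomly oriented quadrant), so $\theta_{NE}(1-p)>0$ whenever $1-p>\vec{p}_c$, and Theorem~\ref{prop:main} then gives $\theta_H(p)=0$. No difference in substance from the paper's (implicit) proof.
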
 
 Inserting the upper bound for oriented percolation, we get that $\theta_H(p)=0$ for $p < 0.3265$, which is considerably better 
 than $\frac{1}{\mu^2} \approx 0.15$. It is worth noting that the crucial property of the $H$-model is its' 90-degree rotational symmetry which is absent in the Grimmett model. 
 
 At the end of the paper we consider the Grimmett model in higher dimensions and prove that it is always supercritical, even if confined to a thin slab.
\begin{thm} \label{prop:dGrimmett}
	The $3$-dimensional Grimmett model confined to the slab $\mathbb{Z}^2 \times \{-1, 0, 1\}$ percolates for any $p \in [0, 1]$.
\end{thm}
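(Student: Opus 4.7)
The plan is to reduce the slab percolation problem to a supercritical $2$D oriented Bernoulli percolation via a renormalization / super-edge argument. By the global orientation-reversal symmetry (which sends $p$ to $1-p$ and preserves the event ``infinite directed path exists''), it suffices to treat $p \geq \frac{1}{2}$; the degenerate cases $p \in \{0,1\}$ are trivial since the lattice contains deterministic infinite directed rays.

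For each pair of adjacent columns $c$ and $c + \hat{e}$ in $\mathbb{Z}^2$ with $\hat{e} \in \{(1,0),(0,1)\}$, I would declare the oriented super-edge from $c$ to $c + \hat{e}$ to be \emph{open} if at least one of the three horizontal rungs from $(c, z)$ to $(c + \hat{e}, z)$, for $z \in \{-1, 0, 1\}$, is oriented in direction $\hat{e}$. Distinct super-edges depend on disjoint triples of horizontal edges and so are mutually independent, each open with probability $1 - (1-p)^3 \geq \frac{7}{8}$, which exceeds $\vec{p}_c$. Hence the super-edge process is supercritical oriented Bernoulli percolation on $\mathbb{Z}^2$ and a.s.\ contains an infinite NE-oriented cluster.

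The heart of the proof would then be to lift this infinite super-cluster to an actual infinite directed path in the slab. For each open super-edge one must reach, via vertical edges of the entering column, a height at which the corresponding rung is oriented outward. A case analysis over the four vertical-orientation patterns of each column, labelled UU, UD, DU, DD after the orientations of the two vertical edges, would show that from any entering height the probability of exiting in a prescribed direction is bounded below by some $q > \vec{p}_c$ (for instance, from entering height $0$ at $p = \frac{1}{2}$ a direct computation yields $q = \frac{23}{32}$). Because the vertical edges of distinct columns are independent, the accessibility-aware super-edge events remain independent across distinct column-pairs, so a direct coupling with supercritical Bernoulli oriented percolation, or the Liggett--Schonmann--Stacey stochastic domination theorem, would yield an infinite lifted directed path.

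The main obstacle is precisely this lifting step: the naive existence of an open super-edge does not by itself guarantee traversability, since we must enter the column at a height from which the open rung is vertically reachable, and tracking the entering and exiting heights along the super-cluster introduces Markov-type dependencies between consecutive columns on the super-path. The substantial supercritical margin ($\frac{7}{8}$ versus $\vec{p}_c \approx 0.647$) nevertheless provides ample room to absorb the loss incurred by imposing the accessibility conditions; if a single canonical entering height does not suffice, one can enlarge the basic block from a single column-pair to an $L \times L \times 3$ sub-slab and define good-block events whose probability tends to $1$ with $L$, at the cost of a more elaborate but standard renormalization.
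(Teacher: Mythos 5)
Your reduction correctly identifies the heart of the matter and then leaves it unproved: the lifting step is exactly where the work lies, and neither of your suggested patches goes through. Quantitatively, your single-column estimates are not strong enough: at $p=\tfrac12$ the exit probability from entering height $0$ is indeed $\tfrac{23}{32}$, but from entering heights $\pm 1$ it is only $\tfrac{21}{32}\approx 0.656$, which is \emph{below} the best rigorous upper bound $\vec{p}_c<0.6735$, so the claim ``from any entering height the exit probability exceeds $\vec{p}_c$'' is not established. Worse, the history-dependence through the entering height means the accessibility-aware step events are not a local field to which one can directly apply a Bernoulli comparison; the natural local substitute (``traversable from \emph{every} entering height'') has probability only $\tfrac{7}{16}$ at $p=\tfrac12$, far below $\vec{p}_c$. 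Your independence claim is also off: two super-edges incident to the same column share that column's vertical edges, so the accessibility-aware events are not independent across column-pairs; some domination argument (e.g.\ Liggett--Schonmann--Stacey) is unavoidable, and it degrades constants further. Finally, the fallback to $L\times L\times 3$ good blocks with probability tending to $1$ is circular: the in-plane process is the $2$-dimensional Grimmett model, equivalent to the independent-arcs process with densities $p$ and $1-p$ summing to exactly $1$, i.e.\ critical, so block-crossing probabilities do not tend to $1$ without already knowing that the slab enhancement produces a supercritical process --- which is the statement to be proved.

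The missing idea, and the one the paper uses, is to choose a much less demanding comparison target than Bernoulli oriented percolation at $\vec{p}_c$. The paper regards each detour $(x,y,0)\to(x,y,\pm1)\to(x\pm1,y,\pm1)\to(x\pm1,y,0)$ (and similarly in the $y$-direction) as an \emph{extra arc} added to $\mathbb{Z}^2\times\{0\}$, uses a chessboard colouring (black vertices detour through the plane $z=1$, white through $z=-1$) together with a recolouring coupling, and applies the Liggett--Schonmann--Stacey lemma to show these extra arcs dominate an independent sprinkling of some density $\rho=(1-p)^3p^6>0$. Then Grimmett's theorem from \cite{Gr2} --- the independent process with arc densities $a,b$ percolates whenever $a+b>1$ --- finishes the proof, since the enriched in-plane process has total density $p+(1-p)+\rho>1$. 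The point is that an arbitrarily small positive boost suffices, so one never needs per-step probabilities above $\vec{p}_c$ and never needs to track entering heights; without an ingredient of this kind (a criticality-plus-sprinkling theorem for the planar process), your renormalization scheme has no mechanism to close the gap.
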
 

  The proof of Theorem \ref{prop:dGrimmett} exploits criticality of the two dimensional Grimmett model that had already been shown in \cite{Gr2}. Though the result supports Grimmett's  original conjecture, it seems that the crucial phenomena occur in the case of random-oriented percolation confined to a quadrant, which probably exhibits a phase transition in all dimensions. At least we can say that for any fixed $d \geq 2$ the $H$-model as well as the NE-quadrant model do not percolate for sufficiently small $p > 0$ due to the standard path-counting argument, but of course they do percolate for $p > \vec{p}_c$.

\section{Percolation in the NE quadrant}
This section is devoted to the proof of Theorem~\ref{prop:main}.
The dual lattice is a copy of  $\mathbb{Z}^2$ translated by the vector $(1/2, 1/2)$, but orientation rules can be defined in two different ways: turning orientations in the original lattice clockwise or counterclockwise. We denote such dual lattices $\mathbb{Z}^{2u}_d$ and $\mathbb{Z}^{2d}_d$ and define them as follows. If the edge $e$ fails to have an orientation in direction $\alpha$, the dual edge $e_d$ has orientation $\alpha + \pi/2$ in $\mathbb{Z}^{2u}_d$ and $\alpha - \pi/2$ in $\mathbb{Z}^{2d}_d$. The corresponding dual NE quadrants we denote by $\mathbb{Q}^u_d$ and $\mathbb{Q}^d_d$.

As an example, to prevent percolation in the NE quadrant, there must be a directed path from $(x, -1/2)$ to $(-1/2, y)$ in $\mathbb{Q}^u_d$ and, equivalently, a directed path from $(-1/2, y)$ to $(x, -1/2)$ in $\mathbb{Q}^d_d$ for some $x, y > 0$. That partly explains the superscripts $u$ (up) and $d$ (down). 
Also, we denote by $\Lambda_n$ the $2n\times2n$ square box with the center at the origin and let $B^{+}_{m,n}(x)$ be the event that there exists a path from $(x, 1/2)$ to  $(1/2, y)$ in $\mathbb{Q}^{u}_d$ for some $y > 0$ which lies entirely within $\Lambda_m\setminus\Lambda_n$. For existence of a path which avoids $\Lambda_n$ we will write $B^{+}_{n}(x)$ and $B^{+}(x)$ for the unconstrained event. In other words, $B^{+}_{n}(x) = \cup_{m > n}B^{+}_{m,n}(x)$ and $B^{+}(x) = \cup_{n > 0}B^{+}_{n}(x)$. As usual, we will denote by $\partial \Lambda_n$ the vertex boundary of the box $\Lambda_n$, i.e.: the set of vertices that have neighbors both inside and outside $\Lambda_n$.

Hereafter we will assume that all paths are in $\mathbb{Z}^{2u}_d$ if it is not explicitly stated otherwise.

We start with a few auxiliary lemmas that explicitly exploit duality.
\begin{lem} \label{lem:nodual}
	If $\theta_{NE}(1-p) > 0$ then $\theta_{NE}(p) = 0$  
\end{lem}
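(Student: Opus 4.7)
The plan is to exploit the planar duality developed in this section together with a distributional symmetry that interchanges the parameters $p$ and $1-p$. The orientations on $\mathbb{Z}^{2u}_d$ at primal parameter $p$ place each dual horizontal edge rightward with probability $p$ and each dual vertical edge downward with probability $p$. Applying the horizontal reflection $(a,b)\mapsto(-a,b)$ — which swaps right and left while fixing vertical orientations — and then the shift $(a,b)\mapsto(a-1/2,b+1/2)$, this dual model is distributionally identical, on $\mathbb{Z}^2$, to the primal orientation model at parameter $1-p$ (horizontal right with probability $1-p$, vertical up with probability $1-p$). Under the same reflection and shift, $\mathbb{Q}^u_d$ maps onto the primal NW quadrant $\{x\leq 0,\,y\geq 0\}$, the bottom boundary $\{(x,-1/2)\}$ maps to the negative $x$-axis $\{(-k,0):k\geq 1\}$, and the left boundary $\{(-1/2,y)\}$ maps to the positive $y$-axis $\{(0,\ell):\ell\geq 1\}$. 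Consequently a dual blocking path from $(x,-1/2)$ to $(-1/2,y)$ becomes a directed primal path at parameter $1-p$, confined to the NW quadrant, from some $(-k,0)$ to some $(0,\ell)$.

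Combined with the duality identity $\theta_{NE}(p)+\p_p(\exists\text{ dual blocking})=1$, this yields
\[
1-\theta_{NE}(p) \;=\; \p_{1-p}\bigl(\exists\,k,\ell\geq 1\text{ with NW-quadrant directed path }(-k,0)\to(0,\ell)\bigr).
\]
The lemma thus reduces to showing that, whenever $\theta_{NE}(1-p)>0$, the right-hand crossing event holds almost surely at parameter $1-p$. I would establish this by stochastic domination combined with uniqueness of the infinite cluster: the NE-quadrant model is dominated by the full-plane Grimmett model, so $\theta_{NE}(1-p)>0$ forces $\theta_G(1-p)>0$, i.e.\ Grimmett's model at parameter $1-p$ is supercritical. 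Xianyuan's uniqueness theorem (cited in the introduction) then gives an almost surely unique infinite directed cluster at $1-p$, and a planarity argument based on the geometry of the NW quadrant shows this cluster contains a directed path connecting the negative $x$-axis to the positive $y$-axis within the quadrant, which is exactly the crossing sought.

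The main obstacle is this last geometric step: translating the existence of a unique infinite directed cluster in the full plane into the almost sure existence of a crossing \emph{confined} to the NW quadrant between the two semi-axes. A cleaner self-contained alternative, which bypasses this difficulty, combines the monotonicity of $\theta_{NE}(\cdot)$ (from the standard Bernoulli coupling assigning each edge the same uniform $[0,1]$ variable at all parameters) with the estimate $\theta_{NE}(1/2)\leq\theta_G(1/2)=0$, the latter inherited via the coupling with classical bond percolation from Lemma~2.1 of \cite{Lin1} referenced in the introduction. Since $\theta_{NE}(1-p)>0$ combined with $\theta_{NE}(1/2)=0$ forces $1-p>1/2$, i.e.\ $p<1/2$, monotonicity then gives $\theta_{NE}(p)\leq\theta_{NE}(1/2)=0$, which is the conclusion of the lemma.
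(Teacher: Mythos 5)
Your ``cleaner self-contained alternative'' --- the route you actually rely on --- has a fatal gap: it assumes that $\theta_{NE}(\cdot)$ is monotone in $p$, justified by the standard Bernoulli coupling. That coupling argument fails here, because the event of an infinite directed path from the origin is \emph{not} an increasing function of the edge orientations: a directed path in the NE quadrant is free to traverse edges pointing leftwards or downwards, so re-orienting an edge from leftwards to rightwards as the parameter grows can destroy an existing infinite path. Monotonicity of the percolation functions is precisely what is unknown for random-orientation models (it is Conjecture~\ref{con:monotonicity} for the $H$-model, and Xianyuan's conjecture for Grimmett's model), and the paper's conclusion explicitly identifies the absence of increasing connectivity events as the main obstruction in this whole area. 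So your fallback argument assumes something at least as hard as the results of the paper, and the deduction ``$\theta_{NE}(1-p)>0$ and $\theta_{NE}(1/2)=0$ force $p<1/2$, hence $\theta_{NE}(p)\leq\theta_{NE}(1/2)$'' collapses.

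Your first route is set up correctly: the reflection-and-shift bookkeeping showing that the dual quadrant $\mathbb{Q}^u_d$ carries parameter $1-p$, and the reduction of the lemma to the almost sure occurrence of a dual blocking crossing, are exactly the paper's reduction. But the step you yourself flag as the ``main obstacle'' is the whole content of the lemma, and neither stochastic domination nor Xianyuan's uniqueness theorem fills it: uniqueness of the infinite cluster in the full plane says nothing about the existence of a directed crossing \emph{confined} to a quadrant between its two bounding semi-axes. The paper closes precisely this gap by invoking Lemma~5.2 of Harris \cite{Harr}: since the dual quadrant percolates with positive probability at parameter $1-p$, a Harris-type argument gives that with probability one there is an oriented path in $\mathbb{Q}^u_d$ from some $(x,-1/2)$ to some $(-1/2,y)$, and such a path blocks the primal NE-cluster, forcing $\theta_{NE}(p)=0$. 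As written, neither of your two routes completes the proof.
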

\begin{proof}
	This lemma can be proven in exactly the same way Harris showed there is no bond percolation in the quadrant at $\frac{1}{2}$ in his seminal paper \cite{Harr}. The only observation we need is that $\mathbb{Z}^{2u}_d$ has percolation parameter $1-p$ if we fix its origin at some point $(x, -1/2)$.
Then, according to Lemma 5.2 of \cite{Harr}, since $\theta_{NE}(1-p) > 0$, with probability one there is an oriented path in $\mathbb{Q}^{u}_d$ from $(x, -1/2)$ to $(-1/2, y)$ for some $x, y > 0$ because any dual path started at the $x$-axis crosses the $y$-axis a.s. But this means that the NE-cluster in the original lattice is finite a.s.   
\end{proof}

\begin{lem} \label{lem:limit}
	Let $n > 0$ and $\theta = \theta_{NE}(1-p) > 0$. Recall that $B^{+}_n(x)$ denotes the event that there is a path in $\mathbb{Q}^{u}_d$ from $(x, 1/2)$ to $(1/2, y)$ outside the box $\Lambda_n$ for some $y > 0$. Then 
	$$
		\liminf_{x \to \infty} \p\{B^{+}_n(x)\} \geq \theta. 
	$$
\end{lem}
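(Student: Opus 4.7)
The plan is to use translation invariance of the dual $\mathbb{Z}^{2u}_d$ together with positivity of the asymptotic speeds of supercritical infinite NW-oriented clusters to extract, for $x$ large, a witness of $B^{+}_n(x)$ from the cluster rooted at $(x,1/2)$.

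First, as in the proof of Lemma~\ref{lem:nodual}, NW-oriented percolation in $\mathbb{Z}^{2u}_d$ is the standard symmetric oriented bond-percolation model of parameter $1-p$, supercritical by hypothesis; denoting by $A_x$ the event that $(x,1/2)$ roots an infinite NW cluster, translation invariance gives $\p(A_x) = \theta$ for every half-integer $x > 1/2$. On $A_x$, classical results for supercritical 2D oriented bond percolation provide an infinite NW path $\pi$ from $(x,1/2)$ whose rotated coordinates $(u,v) = (x - x',\, y' - 1/2)$ satisfy $u(\pi_t), v(\pi_t) \to \infty$ almost surely. Define the stopping times
\[
T_n = \inf\{t : v(\pi_t) \geq n\}, \qquad T^\ast = \inf\{t : u(\pi_t) \geq x - 1/2\},
\]
both a.s.\ finite on $A_x$. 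The restriction $\pi|_{[0,T^\ast]}$ is a finite NW dual path in $\mathbb{Q}^{u}_d$ from $(x,1/2)$ to some $(1/2,y^\ast)$ with $y^\ast > 0$.

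On the event $\{T_n < x\}$: for $t \leq T_n - 1$, $v(\pi_t) \leq n - 1$, so by monotonicity $u(\pi_t) \leq u(\pi_{T_n - 1}) = T_n - n < x - n$, whence $x'(\pi_t) > n$; for $t \geq T_n$, $y'(\pi_t) \geq n + 1/2 > n$. Thus the whole restricted path lies outside $\Lambda_n$, witnessing $B^{+}_n(x)$. Because translation invariance in the $x$-direction makes the conditional law of $T_n$ given $A_x$ independent of $x$, and $T_n < \infty$ almost surely, one has $\p(T_n < x \mid A_x) \to 1$ and hence
\[
\p(B^{+}_n(x)) \geq \p(A_x \cap \{T_n < x\}) = \theta \cdot \p(T_n < x \mid A_x) \to \theta,
\]
which gives the required $\liminf$ bound.

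The main obstacle is the input \emph{$v(\pi_t) \to \infty$ a.s.\ on $A_x$}, i.e.\ the strict positivity of the northward asymptotic speed of the rightmost infinite path in supercritical 2D oriented bond percolation (Durrett); while classical, it must be invoked carefully. A more elementary alternative would use Harris's Lemma~5.2 applied to the shifted dual NE quadrant with corner $(n+1/2, n+1/2)$ to produce an a.s.\ NW crossing above $\Lambda_n$, and splice it via FKG and oriented slab percolation to initial and terminal segments in the east and north arms of $\mathbb{Q}^{u}_d \setminus \Lambda_n$.
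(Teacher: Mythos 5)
The central gap is that you treat the dual lattice $\mathbb{Z}^{2u}_d$ as standard NW\emph{-oriented} bond percolation with parameter $1-p$ and then import oriented-percolation machinery (Durrett's positive asymptotic speeds, $u(\pi_t),v(\pi_t)\to\infty$, monotonicity of $u$ along the path). But $\mathbb{Z}^{2u}_d$ is a random-orientation model: every dual edge carries exactly one orientation, N or W with probability $1-p$ and S or E with probability $p$, and a directed dual path may take steps in all four directions. The hypothesis $\theta=\theta_{NE}(1-p)>0$ is about this random-orientation quadrant model, not about supercritical oriented percolation, so the classical results you invoke are not available. Concretely, two steps collapse. First, the identity $u+v=t$ and the monotonicity $u(\pi_t)\le u(\pi_{T_n-1})$ — which is the entire mechanism by which you force the path outside $\Lambda_n$ on $\{T_n<x\}$ — hold only for paths made of N and W steps; an admissible dual path can travel westwards at height $y'=1/2$ and enter $\Lambda_n$ long before its height reaches $n$, and nothing in your argument excludes this. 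Second, the a.s.\ finiteness of $T_n$ and $T^{\ast}$ rests on the unproved assertion that both displacements diverge along some infinite dual path from $(x,1/2)$; for the random-orientation model no such structural information about infinite paths is known (this lack of control is exactly why these models are hard). Ruling out low-lying dual crossings that clip $\Lambda_n$ for large $x$ is the actual content of the lemma, and it cannot be obtained from a path monotonicity the model does not possess. Your fallback of splicing crossings ``via FKG'' is also unavailable: connectivity events here are not increasing and Harris--FKG does not apply, as the paper itself emphasizes.

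For comparison, the paper's proof avoids all questions about the geometry of dual paths. One compares each configuration $\omega$ with the modification $\omega'$ in which every edge inside $\Lambda_n$ is forced outward, and notes that if $\omega\in B^{+}(x)$ but $\omega'\notin B^{+}(x)$ then a primal NE-path emanating from $\partial\Lambda_n$ must block $(x,1/2)$ from the $y$-axis. If this happened for infinitely many $x$ on a set of positive probability, there would be arbitrarily long (hence, by compactness, infinite) primal NE-paths from the finite set $\partial\Lambda_n$, giving $\theta_{NE}(p)>0$ and contradicting Lemma~\ref{lem:nodual}. Hence a.s.\ only finitely many $x$ are affected, so $\liminf_{x\to\infty}\p(B^{+}_n(x))=\liminf_{x\to\infty}\p(B^{+}(x))\ge\theta$, the last bound being the Harris Lemma~5.2 argument you also cite. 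You would need an argument of this kind — some substitute for the missing monotonicity — to close your proof.
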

\begin{proof}
    For each dual configuration $\omega$, let $\omega'$ be the modification of $\omega$ such that all edges inside $\Lambda_n$ are directed outwards from the point $(1/2, 1/2)$. Let $N(\omega)$ be the number of points $(x, 1/2)$ such that $\omega \in B^{+}(x)$ but $\omega' \notin B^{+}(x)$. Finally, 	  let $A$ be the set of all configurations $\omega$, such that $N(\omega) = \infty$. We claim that $\p(A) = 0$. 
    
    Let us assume $\p(A) > 0$ for the sake of contradiction. For $\omega \in A$ and $x > 0$, conditions $\omega \in B^{+}(x)$ and $\omega' \notin B^{+}(x)$
imply existence of a path $\partial\Lambda_n \to (x-1/2, y)$ in the original NE-quadrant of $\mathbb{Z}^2$ for some $y > 0$. Indeed, since $\omega' \notin B^{+}(x)$ there must be a NE-path in the original lattice that blocks $(x, 1/2)$ from the $y$-axis in the dual (all other configurations would have probability zero). On the other hand, it can emanate only at the boundary of $\Lambda_n$, because otherwise outwards orientation in $\Lambda_n$ would have no effect on $B^{+}(x)$. But due to the fact that $N(\omega) = \infty$ we conclude that there must be arbitrarily long NE-paths in the original lattice, hence there exists an infinitely long one, implying $\theta_{NE}(p) \geq \p(A) > 0$ and contradicting Lemma~\ref{lem:nodual}.

    Now, defining $N_m(\omega)$ as above but counting only points $(x, 1/2)$ with $x > m$ we have 
    $$\lim_{m \to \infty} \p(\{\omega | N_m(\omega) > 0\}) = 0$$ 
    and thus 
    $$
    	\liminf_{x \to \infty} \p\{ B^{+}_n(x) \} = \liminf_{x \to \infty} \p\{ B^{+}(x) \} \geq \theta.
    $$ 
\end{proof}

\begin{corr} \label{corr:q}
	Consider the $H$-model with edge probability $p$. Suppose $\theta = \theta_{NE}(1-p) > 0$. Then, for any $n, d > 0$ there exist $0 < N < M_0 < M$ such that $M_0 - N > d$ and
	$$\p(B^{+}_{M, n}(x)) > \frac{\theta}{2}$$ 
for each $x \in [N, M_0]$.
\end{corr}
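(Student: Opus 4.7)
The plan is to deduce the corollary directly from Lemma~\ref{lem:limit} together with continuity of measure from below. The key observation is that $B^{+}_{M,n}(x)$ is monotone non-decreasing in $M$: any admissible path lying entirely within $\Lambda_M \setminus \Lambda_n$ also lies within $\Lambda_{M+1} \setminus \Lambda_n$. Consequently $B^{+}_n(x) = \bigcup_{M > n} B^{+}_{M,n}(x)$ is an increasing union, and by continuity of $\mathbb{P}$ from below, $\lim_{M \to \infty} \p(B^{+}_{M,n}(x)) = \p(B^{+}_n(x))$ for every fixed $x$.

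First I would use Lemma~\ref{lem:limit} to fix $N > n$ sufficiently large that $\p(B^{+}_n(x)) \geq \tfrac{3\theta}{4}$ for every admissible dual-vertex abscissa $x \geq N$; such an $N$ exists because $\liminf_{x \to \infty} \p(B^{+}_n(x)) \geq \theta > \tfrac{3\theta}{4}$. Then set $M_0 := N + d + 1$, which gives $M_0 - N > d$. Since the dual vertices on the line $y = 1/2$ have $x$-coordinates in a discrete set, the collection $S$ of admissible abscissae lying in $[N, M_0]$ is finite.

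Next, for each $x \in S$, continuity from below yields some integer $M_x > n$ with $\p(B^{+}_{M_x, n}(x)) > \tfrac{\theta}{2}$, since $\p(B^{+}_n(x)) \geq \tfrac{3\theta}{4} > \tfrac{\theta}{2}$. Finally, taking $M := \max_{x \in S} M_x$ and invoking monotonicity of $B^{+}_{M,n}(x)$ in $M$, we obtain $\p(B^{+}_{M,n}(x)) \geq \p(B^{+}_{M_x, n}(x)) > \tfrac{\theta}{2}$ simultaneously for every $x \in S$, which is exactly the asserted conclusion.

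I do not expect any genuine obstacle: the whole content of the corollary is that the pointwise $\liminf$ of Lemma~\ref{lem:limit} can be promoted to a uniform lower bound over a finite interval, together with the routine replacement of the unconstrained event $B^{+}_n(x)$ by its box-confined truncation $B^{+}_{M,n}(x)$, both of which rely only on finiteness of $S$ and countable additivity of $\mathbb{P}$.
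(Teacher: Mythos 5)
Your proposal is correct and follows essentially the same route as the paper: invoke Lemma~\ref{lem:limit} to get a uniform lower bound slightly above $\theta/2$ for all sufficiently large $x$, set $M_0 = N + d + 1$, and then use the increasing union $B^{+}_{n}(x) = \cup_{m>n} B^{+}_{m,n}(x)$ over the finitely many admissible $x$ in $[N, M_0]$ to choose $M$ large enough. You merely spell out the continuity-from-below and take-the-maximum steps that the paper leaves implicit, so there is no substantive difference.
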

\begin{proof}
   According to Lemma~\ref{lem:limit} we can pick $N$ such that  $\p\{B^{+}(x) | \Lambda_n \, \mathrm{is\,\,blocked} \,\} > 2\theta/3$ whenever $x > N$. As $B^{+}_{n}(x) = \cup_{m > n}B^{+}_{m,n}(x)$ by definition, it remains to take $M_0 = N + d + 1$ and $M$ large enough to fulfill the desired conditions.  
\end{proof}

Now we iterate Corollary~\ref{corr:q} to extend the directed path in the following way. Consider the event $B^{O}(A)$ that there exists a directed path \begin{equation} \label{eq:path}
(A, 1/2) \to (1/2, B) \to (-C, -1/2) \to (1/2, -D) \to (E, 1/2)
\end{equation}

 for some $B, C, D, E > 0$, where each part if the path, apart from axis crossings, lies inside a single dual quadrant. See Figure~\ref{fig:circuits}.

\begin{lem} \label{lem:extend}
 For each $N > 0$ there exists $M > N$ such that 
	$$\p\{B^{O}(A) \,\,\, \mathrm{in} \,\,\, \Lambda_M \setminus \Lambda_N\} > \left(\frac{\theta(1-p)}{2}\right)^4$$ 
	for some $A \in [N, M]$.
\end{lem}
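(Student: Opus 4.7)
The plan is to build the circuit in $B^{O}(A)$ one dual quadrant at a time, invoking Corollary~\ref{corr:q} once in each of the four dual quadrants and paying a factor of $1-p$ for each of the four axis-crossing dual edges that chain consecutive segments together. By the $90^\circ$ rotational symmetry of the $H$-model, after an appropriate rotation the distribution inside each dual quadrant agrees with that of $\mathbb{Q}^u_d$, so Corollary~\ref{corr:q} applies verbatim (up to rotation) in each. The four quadrant annuli and the four axis-crossing edges involve pairwise disjoint sets of primal edges, hence the eight underlying events are mutually independent, and multiplying the four quadrant bounds $\theta/2$ with the four crossing bounds $1-p$ yields the claimed $(\theta(1-p)/2)^4$.

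For the axis-crossing factor, consider a prototype: the transition from the NE dual to the NW dual via the dual edge $(1/2, y) \to (-1/2, y)$. Under the $\mathbb{Z}^{2u}_d$ duality rule, this dual edge is present exactly when the primal edge it crosses --- a vertical edge on the primal $y$-axis --- is oriented towards the origin, an event of probability $1-p$ in the $H$-model. Since this primal edge lies on the $y$-axis, it is outside all four quadrant annuli, so it is independent of the four quadrant segments and the other three crossings. The other three crossings are treated identically by rotational symmetry.

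The main obstacle is that Corollary~\ref{corr:q} controls the existence of a path from a prescribed starting point on one axis to \emph{some} (random) point on the orthogonal axis, while in the iterative construction each new segment must start from the specific endpoint of the previous segment. I resolve this by choosing the parameters of the four corollary applications in a nested fashion, working backwards: first set $n_4=N$ and let $N_4$ be the threshold the corollary produces for this hole-size; then set $n_3=\max(N,N_4)$ with threshold $N_3$; continue with $n_2=\max(N,N_3)$ and $n_1=\max(N,N_2)$, producing thresholds $N_2, N_1$. Because segment $i$ is forced to lie outside $\Lambda_{n_i}\supseteq\Lambda_{N_{i+1}}$, its random endpoint deterministically exceeds $N_{i+1}$ and so lies at or above the left end of the starting range $[N_{i+1},M_0^{(i+1)}]$ that Corollary~\ref{corr:q} provides for segment $i+1$. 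Finally pick $d_1=1$ and then $d_2,d_3,d_4$ successively large enough that $M_0^{(i+1)}\geq M_i$, which also puts the endpoint of segment $i$ below the right end of that next starting range. Setting $A=N_1$ and $M=M_4$ ensures the whole eight-piece construction lies in $\Lambda_M\setminus\Lambda_N$, and multiplying the independent factors gives the claim.
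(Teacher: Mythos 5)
Your proposal is correct and follows essentially the same route as the paper: iterate Corollary~\ref{corr:q} once per dual quadrant using the $90^\circ$ rotational symmetry, pay $1-p$ for each of the four axis-crossing dual edges, use disjointness of the primal edge sets for independence, and nest the corollary's parameters so that the random endpoint of each segment lands deterministically in the guaranteed starting range of the next (the paper does exactly this, phrased as ``enlarging $M_0$ and, subsequently, $M_1$ if necessary'' and ``repeating this consideration twice''). Your backwards-then-forwards bookkeeping of holes, thresholds and radii is just a more explicit organization of the same nesting, and it implicitly (and legitimately, given the corollary's proof) uses that the threshold can be chosen depending only on the hole size, not on $d$.
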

\begin{proof}
	We may choose $M_1 > M_0 > N_1 > N > 0$ such that $\p\{B^{+}_{M_1, N}(x) > \frac{\theta}{2}\}$ for all $x \in (N_1, M_0)$. Thanks to Corollary~\ref{corr:q}, then, we pick $M'_0, N_2$ (enlarging $M_0$ and, subsequently, $M_1$ if necessary) having
	$M_0 > M'_0 > N_2 > N_1 > 0$ such that $\p\{B^{+}_{M_0, N_1}(x) > \frac{\theta}{2}\}$ whenever $x \in (N_2, M'_0)$. This guarantees that the probability of a directed path  
	$(x, 1/2) \to (1/2, B) \to (-C, 1/2)$ is greater than $\theta^2(1-p)/4$. Indeed, consider three events for any $x \in (N_2, M'_0)$:
	\begin{enumerate}
	  \item There exists a directed path $(x, 1/2) \to (1/2, B)$ in  $\Lambda_{M_0} \setminus \Lambda_{N_1}$
	  \item The axis crossing edge has direction $(1/2, B) \to (-1/2, B)$
	  \item There exists a directed path $(-1/2, B) \to (-C, 1/2)$ in $\Lambda_{M_1} \setminus \Lambda_{N}$
	\end{enumerate}
	By construction, $M_0 > B > N_1$ and these three events are independent since they depend on disjoint edge sets. Thus, the probability that they occur simultaneously is greater than $\theta^2(1-p)/4$. Repeating this consideration twice, we get the claim of the corollary. Note, that here the rotational symmetry of the $H$-model comes into play: turning by $\frac{\pi}{2}$ each time we are able to construct the almost closed path with probability bounded away from zero.  
\end{proof}

The following lemma asserts that the probability of a closed dual path is also bounded away from zero and is crucial. For $A > 0$, denote by $B^{O}(A, A)$ the event that there exists a closed directed circuit in $\mathbb{Z}^{2u}_d$ of the form (\ref{eq:path}) starting and finishing at $(A, 1/2)$. We now claim
\begin{lem} \label{lm:main}
   For each $N > 0$ there exists $M > N$ such that 
   $$\p\{B^{O}(A, A) \,\,\, \mathrm{in} \,\,\, \Lambda_M \setminus \Lambda_N\} > \frac{1}{9}\left(\frac{\theta(1-p)}{2}\right)^8$$
   for some $A \in [N, M]$.
\end{lem}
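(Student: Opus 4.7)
Plan. The plan is to upgrade the directed loop produced by Lemma~\ref{lem:extend} into a closed circuit by running two edge-disjoint copies on nested annuli and forcing the two random endpoints to land back at the prescribed base-point $(A,1/2)$. The exponent $8$ in the bound will come from two applications of Lemma~\ref{lem:extend} (each contributing $\left(\tfrac{\theta(1-p)}{2}\right)^4$) on disjoint edge sets, and the prefactor $\tfrac{1}{9} = (1/3)^2$ will come from two rounds of a tripartition pigeonhole. The main obstacle is aligning the two random endpoints with $A$.

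Given $N > 0$, I would first tune the parameters of Corollary~\ref{corr:q} so that the construction inside the proof of Lemma~\ref{lem:extend}, applied to a thin inner annulus $\Lambda_{M_1} \setminus \Lambda_N$, not only starts from any $A$ in an interval $J$ of length at least $3\ell$ (with $\ell$ chosen large), but also lands at an endpoint $E$ lying inside the same interval $J$; this is arranged by taking the ``$d$'' of Corollary~\ref{corr:q} large enough and by inserting one extra application of Corollary~\ref{corr:q} to control the fourth quadrant. Then I would repeat the identical construction inside the disjoint outer annulus $\Lambda_M \setminus \Lambda_{M_1+1}$, producing a second directed loop of the form \eqref{eq:path} on disjoint edges with endpoints also confined to $J$. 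Since the two loops depend on disjoint edge families, their joint existence has probability at least $\left(\tfrac{\theta(1-p)}{2}\right)^8$.

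Next, I would partition $J$ into three equal sub-intervals $J_1, J_2, J_3$, fix $A \in J_2$, and apply pigeonhole twice. Conditional on the inner loop existing, its endpoint $E_{\mathrm{in}}$ lies in some $J_k$; by pigeonhole one sub-interval captures $E_{\mathrm{in}}$ with conditional probability at least $1/3$, and by exploiting the rotational symmetry of the $H$-model together with the freedom in choosing $A$ inside $J_2$, this ``winning'' sub-interval can be arranged to be $J_2$ itself. The same pigeonhole applied to the outer loop yields another factor of $1/3$. In the joint good event both random endpoints lie in $J_2$, and the inner loop, the outer loop, and the short axis arc connecting their endpoints to $(A, 1/2)$ splice into a single closed directed circuit of the form~\eqref{eq:path} based at $(A, 1/2)$, yielding probability at least $\tfrac{1}{9}\left(\tfrac{\theta(1-p)}{2}\right)^8$.

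The hard part will be the last gluing step together with the pigeonhole alignment: arranging that the two pigeonhole-winning sub-intervals are both $J_2$ rather than arbitrary, and verifying that in this event the two loops can indeed be spliced into a single form-\eqref{eq:path} circuit without disturbing the independence used in the probability bound. Both issues will be handled by making the tripartition coarser than the precision afforded by Corollary~\ref{corr:q} and by appealing to the $90^\circ$ rotational symmetry of the $H$-model, so that the short axis arc closing the circuit contributes only fixed combinatorial factors that are absorbed into the constants of Lemma~\ref{lem:extend}.
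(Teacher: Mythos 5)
Your construction diverges from the paper's and, as written, breaks down at exactly the point you flag as ``the hard part''. First, the geometry is inconsistent: if the second loop is built in the disjoint outer annulus $\Lambda_M\setminus\Lambda_{M_1+1}$, its start point and endpoint on the positive $x$-axis necessarily have abscissa larger than $M_1+1$, so they cannot lie in the same interval $J\subset[N,M_1]$ as the endpoints of the inner loop; two concentric loops lying in disjoint annuli cannot be spliced into a single circuit of the form \eqref{eq:path} without radial directed connections that you have not produced. Second, even within one annulus the pigeonhole step does not do what you need: Lemma~\ref{lem:extend} gives no control on the law of the random endpoint $E$, and there is no symmetry that lets you ``arrange'' the winning third of $J$ to be the one containing $A$ --- the $90^\circ$ rotational symmetry of the $H$-model rotates whole configurations, it does not translate the endpoint distribution along the axis. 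Third, even if both endpoints did land within distance $\ell$ of $A$, joining them to $(A,1/2)$ requires a correctly oriented dual path along the axis of length up to $\ell$, an event whose probability decays exponentially in $\ell$ (which you chose large), not a ``fixed combinatorial factor'' absorbable into constants.

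The paper closes the circuit by a different mechanism, and this is the key idea missing from your proposal. Fix $A$ as given by Lemma~\ref{lem:extend}; on the event that some path of the form \eqref{eq:path} exists, the inner-most and outer-most such paths are well defined, and one of three things must happen: $E_{\mathrm{in}}<A$, $E_{\mathrm{out}}>A$, or $B^{O}(A,A)$ occurs. Pigeonhole (a single application, giving the factor $\frac{1}{3}$, not two independent tripartitions) yields that one of these has probability at least $\frac{1}{3}\left(\frac{\theta(1-p)}{2}\right)^4$. In the case $E_{\mathrm{in}}<A$, one conditions on the edges enclosed by the inner-most path and invokes the oppositely oriented dual lattice $\mathbb{Z}^{2d}_d$: by symmetry a clockwise path from $(A,-1/2)$ around the origin ending at some $E'<A$ also has probability at least $\frac{1}{3}\left(\frac{\theta(1-p)}{2}\right)^4$, and any such clockwise path must cross the counter-clockwise inner-most path; the crossing closes a circuit through $(A,1/2)$. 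Independence of the two events up to the first crossing then gives the product bound $\frac{1}{9}\left(\frac{\theta(1-p)}{2}\right)^8$, with the case $E_{\mathrm{out}}>A$ handled symmetrically. If you wish to salvage your route, you need a substitute for this crossing argument; endpoint alignment by pigeonhole alone cannot force the random path to return to the prescribed point $A$.
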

\begin{proof}
 Let us pick $M, A > 0$ given by Lemma~\ref{lem:extend}, such that 
$$
\p\{ B^{O}(A)\,\,\, \mathrm{in} \,\,\, \Lambda_M \setminus \Lambda_N \} >  \left(\frac{\theta(1-p)}{2}\right)^4,
$$ 
and fix the point $A$. Note that if there is at least one $A \to B \to C \to D \to E$ path, the inner-most and outer-most paths are unique and well-defined. Among all such paths $(A, 1/2) \to (1/2, B) \to (-C, -1/2) \to (1/2, -D) \to (E, 1/2)$ we choose the inner-most path $P_{\mathrm{in}}$ and the outer-most path $P_{\mathrm{out}}$, denoting their endpoints by $E_{\mathrm{in}}$  and $E_{\mathrm{out}}$ respectively, see Figure~\ref{fig:circuits}.  Since the inner-most path always lies inside the outer-most one (though they may touch each other or even coincide), conditioning on the existence of least one $A \to B \to C \to D \to E$ path, at least one of three events must occur: $E_{\mathrm{in}} < A$, $E_{\mathrm{out}} > A$ or $B^{O}(A, A)$, whence 
$$
	\p\{E_{\mathrm{in}} < A\} + \p\{E_{\mathrm{out}} > A\} + \p(B^{O}(A, A)) \geq \left(\frac{\theta(1-p)}{2}\right)^4.
$$
If 
$$
\p(B^{O}(A, A)) \geq \frac{1}{3}\left(\frac{\theta(1-p)}{2}\right)^4,
$$
we are done, so we suppose 
$$
\p\{E_{\mathrm{in}} < A\} \geq \frac{1}{3}\left(\frac{\theta(1-p)}{2}\right)^4
$$
without loss of generality. Conditioning on the states of the enclosed edges (the dashed region on the left part of Figure~\ref{fig:circuits}), we ensure that the enclosing path is indeed inner-most. Now, recall the other dual lattice $\mathbb{Z}^{2d}_d$ directed opposite to $\mathbb{Z}^{2u}_d$. By symmetry, the probability for a clockwise oriented path $(A, -1/2) \to (-1/2, -B') \to (-C', -1/2) \to (-1/2, D') \to (E', 1/2)$ such that $E' < A$ is equal to $\p\{E_{\mathrm{in}} < A\}$ and any such path must cross $(A, 1/2) \to (1/2, B_{\mathrm{in}}) \to (-C_{\mathrm{in}}, -1/2) \to (1/2, -D_{\mathrm{in}}) \to (E_{\mathrm{in}}, 1/2)$ whenever $E_{\mathrm{in}} < A$. The states of the edges enclosed by the counter-clockwise path are independent of the existence of the clockwise path up to the first crossing point with the latter, and such a crossing produces a closed circuit in $\mathbb{Z}^{2u}_d$ (and, equivalently, in $\mathbb{Z}^{2d}_d$). Thus,
$$
  \p(B^{O}(A, A)) \geq \p\{\mathrm{two\,paths\,exist}, E_{\mathrm{in}} < A,  E'_{\mathrm{in}} < A\} \geq \frac{1}{9}\left(\frac{\theta(1-p)}{2}\right)^8.
$$       
The case 
$$
\p\{E_{\mathrm{out}} > A\} \geq \frac{1}{3}\left(\frac{\theta(1-p)}{2}\right)^4
$$
is completely analogous, but we condition on the states of the outer edges (see the right-hand side of Figure \ref{fig:circuits}).

\begin{figure*}[t] 
\centering
\includegraphics[width=1.0\textwidth]{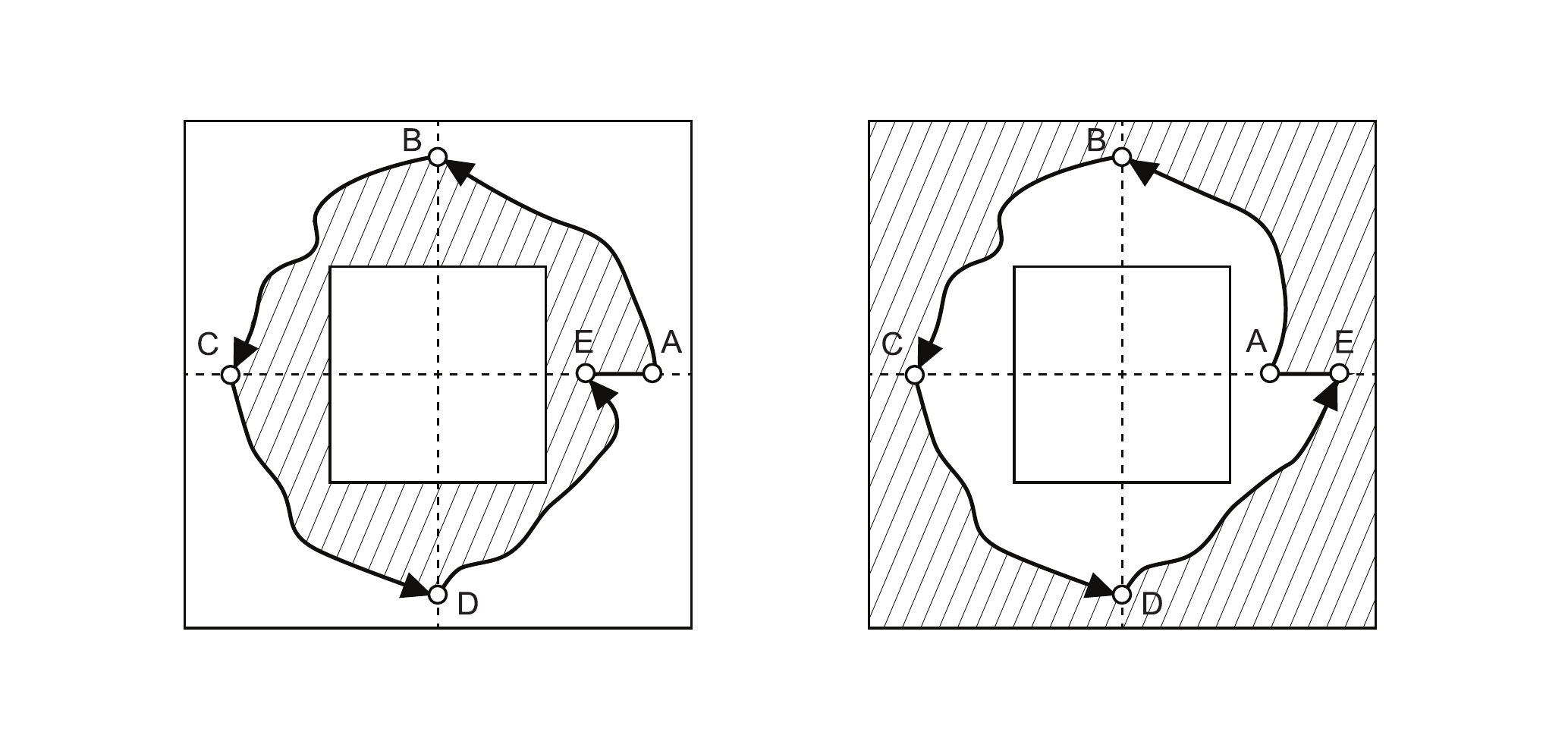}
\caption{At least one of three events must occur: $E_{\mathrm{in}} < A$ (on the left), $E_{\mathrm{out}} > A$ (on the right) 
or $B^{O}(A, A)$ (closed circuit). In the first and second cases we condition on the states of edges in the dashed region to apply a symmetry argument.}
\label{fig:circuits}
\end{figure*}
 
To finish the proof of Theorem~\ref{prop:main}, it simply remains to set up an infinite collection of frames provided by Lemma~\ref{lm:main} and apply the Borell-Cantelli lemma.

\end{proof}

\section{Higher dimensions}
In light of Theorem~\ref{prop:main} it is reasonable to guess that both the $H$- and Grimmett models are equivalent to percolation in the 
NE-quadrant. Indeed, suppose, say, $\theta_G(p) > 0$ and $p > 0$. According to Lemma~\ref{lem:nodual} either $\theta_{NE}(p) = 0$ or $\theta_{NE}(1-p) = 0$ (or maybe both). 
	Analogously, due to self-duality of the NW and the SE quadrants of the Grimmett model for all $p \geq 0$ it is easy to see that the percolation cluster restricted to any of these quadrants is almost surely finite. So, it is easy to believe that the infinite part of the percolation cluster (provided it is infinite) stays in the NE quadrant for $p > 1/2$, but we were unable to prove this. 

   The $d$-dimensional $H$- and Grimmett models differ significantly for $d \geq 3$: whereas the $H$-model probably remains equivalent to percolation in the quadrant, it is not difficult to show that $\theta^d_G(p) > 0$ for all $p$ when $d \geq 3$. In this section we will prove an even stronger result, namely that the Grimmett model percolates in any 3-dimensional slab of height at least three, as has already been announced in Theorem \ref{prop:dGrimmett}. On the other hand, the standard path counting argument implies that $\theta^d_H(p) = 0$ in all dimensions for sufficiently small $p > 0$ (which, of course, depends on $d$). 
 
 The idea of the proof of Theorem \ref{prop:dGrimmett} is that one can consider spatial orientated paths of the form 
	$$
	(x, y, 0) \to (x, y, 1) \to (x+1, y, 1) \to (x+1, y, 0) 
	$$
 as an additional arc $(x, y, 0) \to (x+1, y, 0)$ in the lattice $\mathbb{Z}^2 \times \{0\}$ and then apply the following theorem by Grimmett \cite{Gr2}. 
\begin{thm} {\rm (\textbf{Grimmett}, \cite{Gr2})} \label{thm:Gr2}
	Consider the following independent process on $\mathbb{Z}^2$ with parameters $a$ and $b$: rightward and leftward (respectively, upward and downward) arcs are placed independently between each pair of horizontal (respectively, vertical) neighbors. The probability of each upward or rightward arc being placed is $a$ and the probability of each downward or leftward arc being placed is $b$.
	 
	If $a + b > 1$ then the independent process with parameters $a, b$ contains an infinite oriented self-avoiding path from $0$ with strictly positive probability. 
\end{thm}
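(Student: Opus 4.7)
The plan is to establish the theorem via a block-renormalization argument, reducing the problem to a finite-size crossing estimate that one then proves by exploiting the excess $\epsilon := a+b-1 > 0$. Fix a large integer $L$ and for each $k \in \mathbb{Z}^2$ declare the rescaled site $k$ to be \emph{good} if the $L$-box centred at $Lk$ contains a designated oriented crossing, say from its left face to its right face and from its bottom face to its top face, meeting at a common vertex so that crossings in adjacent good boxes can be concatenated. If the probability of $k$ being good can be driven arbitrarily close to $1$ as $L \to \infty$, then the good-site field, which has finite range of dependence, stochastically dominates a supercritical Bernoulli oriented site percolation on the rescaled lattice by the Liggett-Schonmann-Stacey domination theorem; concatenating crossings along an infinite oriented path in the good-site process produces an infinite oriented self-avoiding path from $0$ in the original model with positive probability.

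The heart of the matter is the finite-size crossing estimate. Two basic observations are useful. First, the probability that at least one arc is present on a given edge equals $1 - (1-a)(1-b)$, and the two-variable AM-GM inequality yields $(1-a)(1-b) \leq (1 - (a+b)/2)^2 < 1/4$ whenever $a+b > 1$; hence the underlying unoriented bond percolation model is supercritical with parameter exceeding $3/4$, and unoriented crossings of $L$-boxes fail with probability at most $e^{-cL}$ for some $c = c(a,b) > 0$. Second, the arcs in opposite directions on the same edge are independent, so one may split the process into a \emph{bulk} layer, in which rightward and upward arcs appear with probability $a - \eta$ and leftward and downward arcs with probability $b - \eta$, and an independent \emph{sprinkled} layer that supplies the missing $\eta$-density of arcs in each direction, where $\eta > 0$ is chosen small enough that $(a-\eta) + (b-\eta) > 1$ still holds.

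The main obstacle is upgrading unoriented connectivity to oriented connectivity. My strategy is: in the bulk layer, use Harris-FKG together with the high density of unoriented crossings to locate, with probability $1 - e^{-cL}$, many disjoint \emph{almost-oriented} crossings of the $L$-box --- paths in which only $O(1)$ bonds are traversed against their arc direction. Then the sprinkled layer, being independent of the bulk, patches each defect with probability bounded below by a function of $\eta$ alone, and since there are $\Theta(L)$ essentially independent almost-oriented crossings, a union bound (or a second-moment argument to handle the mild dependencies) shows that at least one is successfully upgraded into a fully oriented crossing with probability $1 - o(1)$ as $L \to \infty$. The delicate point is controlling the dependencies between the candidate paths in the bulk and the sprinkle arcs that must complete them, and quantifying precisely how the excess $\epsilon = a+b-1$ drives the patching probability; this is the step I expect to require the most care.

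With the finite-size crossing estimate in hand, the renormalization scheme outlined in the first paragraph applies directly and completes the proof.
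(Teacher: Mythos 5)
There is a genuine gap, and it sits exactly where you flag "the step I expect to require the most care": the claim that the bulk layer contains, with probability $1-e^{-cL}$, many disjoint \emph{almost-oriented} crossings with only $O(1)$ bonds traversed against their arc direction. Nothing in your argument supports this. The fact that the unoriented model (a bond is open if at least one of its two arcs is present) has density $1-(1-a)(1-b)>3/4$ only gives unoriented crossings; conditionally on a bond being open, the arc in the direction of travel is missing with probability $b(1-a)/\bigl(1-(1-a)(1-b)\bigr)$ (for a rightward traversal), which is bounded away from $0$ in the interesting regime, e.g.\ $a=b$ slightly above $1/2$, where neither parameter exceeds the oriented-percolation threshold. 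So a typical unoriented crossing has $\Theta(L)$ wrong-way bonds, and selecting a crossing with only $O(1)$ defects is not a consequence of Harris--FKG or of the high unoriented density --- it is essentially a quantitative form of the theorem you are trying to prove. The sprinkled layer cannot rescue this: patching $\Theta(L)$ defects, each requiring a specific $\eta$-probability arc, has probability $\eta^{\Theta(L)}\to 0$, and no union bound or second-moment argument over $\Theta(L)$ candidate crossings compensates for that. (A secondary, more technical issue: your "good box" definition --- two crossings meeting at a common vertex --- does not by itself make oriented crossings in adjacent good boxes concatenable; oriented renormalization needs a construction that forces the exit set of one box to feed the entry set of the next.)

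For comparison, the route actually used by Grimmett in \cite{Gr2} (which this paper only cites; it gives no proof, but sketches the idea in its conclusion) avoids any finite-size crossing estimate. One observes that the independent arc process is a product measure, monotone in the pair $(a,b)$, so Menshikov-type sharpness/exponential decay applies in the subcritical regime; one then combines this with the self-duality of the model along the line $a+b=1$, in the spirit of the Harris--Zhang argument, to conclude that $a+b>1$ forces an infinite oriented path with positive probability. If you want to salvage a renormalization proof, you would first need an independent argument that oriented box-crossing probabilities tend to $1$ when $a+b>1$, and at present the only known way to get that is through the duality-plus-sharpness mechanism --- i.e., through Grimmett's argument itself.
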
 

 In the same paper \cite{Gr2} it is shown that the Grimmett model is equivalent to the independent process with parameters $p$ and $1-p$, which implies that if additional arcs introduced above would have been placed independently, the process is supercritical. Thus, the main technical difficulty to overcome is the dependence between such paths for neighboring vertices in $\mathbb{Z}^2 \times \{0\}$.

To do so, we will use Lemma 1.1 from \cite{LSS} to "bound" the dependent measure by a product measure from below.  
\begin{lem}  \label{lem:LSS}
	Suppose that $(X_s)_{s \in S}$ is a family of $\{0, 1\}$-valued random variables, indexed by a countable set $S$, with joint law $\nu$.
Suppose $S$ is totally ordered in such a way that, given any finite subset of $S$, $s_1 < s_2 < \ldots < s_j < s_{j+1}$,
and any choice of $\epsilon_1, \epsilon_2, \ldots, \epsilon_j \in \{0, 1\}$, then, whenever  
$\p(X_{s_1} = \epsilon_1, \ldots, X_{s_j} = \epsilon_j) > 0$,
\begin{equation} \label{ineq:marginal}
  \p(X_{s_{j+1}} = 1| X_{s_1} = \epsilon_1, \ldots, X_{s_j} = \epsilon_j) \geq \rho.
\end{equation}

Then $\nu$ stochastically dominates $\pi^{S}_{\rho}$, which is a product measure with parameter $\rho$. 
\end{lem}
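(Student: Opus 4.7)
The natural approach is to construct an explicit monotone coupling between $\nu$ and $\pi^{S}_{\rho}$ on a single probability space, exhibiting pointwise inequalities $X_s \geq Y_s$ with $Y \sim \pi^{S}_{\rho}$, from which stochastic domination follows by the Strassen-type criterion. The hypothesis on conditional marginals is tailor-made for driving such an inductive coupling by a family of i.i.d.\ uniform random variables.

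Since $S$ is countable, I would fix an enumeration $s_1, s_2, \ldots$ compatible with the prescribed total order, so that the hypothesis applies to every initial segment. On an auxiliary probability space introduce i.i.d.\ $U_1, U_2, \ldots$ uniform on $[0,1]$ and define $Y_n := \mathbf{1}_{\{U_n \leq \rho\}}$, giving a sequence with product law $\pi^{S}_{\rho}$. Then build $(X_n)$ inductively: for any history $\epsilon_1, \ldots, \epsilon_{n-1}$ of positive probability put
$$q_n(\epsilon_1,\ldots,\epsilon_{n-1}) := \p(X_{s_n} = 1 \mid X_{s_1} = \epsilon_1, \ldots, X_{s_{n-1}} = \epsilon_{n-1}),$$
and set $X_n := \mathbf{1}_{\{U_n \leq q_n(X_1, \ldots, X_{n-1})\}}$. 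An easy induction on $n$ verifies that the joint law of $(X_1, \ldots, X_n)$ matches the marginal of $\nu$ on $(s_1, \ldots, s_n)$, since by construction the conditional distribution of $X_n$ given the past is Bernoulli$(q_n)$, exactly as prescribed by $\nu$.

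Now the inequality (\ref{ineq:marginal}) enters in the crucial way: along every realized trajectory $q_n \geq \rho$, so $\{U_n \leq \rho\} \subseteq \{U_n \leq q_n\}$ and therefore $Y_n \leq X_n$ pointwise for every $n$. Consequently, for any increasing cylinder event $A \subseteq \{0,1\}^S$,
$$\pi^{S}_{\rho}(A) = \p(Y \in A) \leq \p(X \in A) = \nu(A),$$
and since increasing cylinder events generate the $\sigma$-algebra of increasing Borel events in the product topology, this yields $\nu \succeq \pi^{S}_{\rho}$ in the usual sense of stochastic domination.

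The only delicate point I anticipate is the passage from finite prefixes to the full countable index set $S$: one must check that the finite-dimensional marginals constructed above are mutually consistent so that Kolmogorov's extension theorem produces the joint process on $\{0,1\}^S$. This is automatic from the inductive definition, since restricting the construction on step $n+1$ to its first $n$ coordinates reproduces the earlier construction verbatim. The positive-probability caveat in the hypothesis is likewise harmless: histories of probability zero contribute nothing to $\nu$, so the values of $q_n$ (and hence of $X_n$) may be defined arbitrarily on those null events without affecting the law.
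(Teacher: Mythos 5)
The paper itself offers no proof of this lemma: it is imported verbatim as Lemma~1.1 of \cite{LSS} and used as a black box, so the relevant comparison is with the original proof there, which is essentially the sequential coupling you propose. Your construction of the coupling via i.i.d.\ uniforms, the check that the law of $(X_1,\ldots,X_n)$ agrees with the corresponding marginal of $\nu$, the treatment of null histories, and the passage to domination are all sound in spirit; note in fact that once you have the pointwise inequality $X_n \geq Y_n$ for all $n$ on a common space, you get $\nu(A) \geq \pi^{S}_{\rho}(A)$ for \emph{every} increasing measurable $A$ directly, so the reduction to increasing cylinder events and the generation argument are not even needed.

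There is, however, one genuine gap: the very first step, ``fix an enumeration $s_1, s_2, \ldots$ compatible with the prescribed total order''. For the hypothesis (\ref{ineq:marginal}) to be applicable at every stage of your induction you need $s_n$ to be the maximum of $\{s_1,\ldots,s_n\}$ for every $n$, i.e.\ the enumeration must be increasing, and such an enumeration exists only when the total order on $S$ has order type $\omega$. The lemma assumes only that $S$ is countable and totally ordered; in the application made in this paper the order is an ``alphabetical'' order on the arcs of $\mathbb{Z}^2 \times \{0\}$, which need not be of type $\omega$ (it has, for instance, no least element), and then your induction cannot be run as written: the hypothesis gives no control on $\mathbb{P}(X_{s_n}=1 \mid X_{s_1},\ldots,X_{s_{n-1}})$ when some earlier-enumerated index exceeds $s_n$ in the order. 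The repair is standard. Run your coupling on an arbitrary \emph{finite} subset $F=\{u_1 < \cdots < u_k\} \subset S$ (any finite totally ordered set can be listed increasingly, and the hypothesis applied to subsets of $F$ is exactly what the induction needs), concluding that every finite-dimensional marginal of $\nu$ dominates the corresponding finite product measure; then pass to the full countable product, for example by approximating increasing continuous functions on $\{0,1\}^S$ uniformly by increasing functions of finitely many coordinates, or by Strassen's theorem together with a weak-compactness argument on the finite couplings. With this modification your argument is complete and coincides with the proof in \cite{LSS}.
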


\begin{proof} \textbf{(of Theorem \ref{prop:dGrimmett})}
	We may assume that the original Bernoulli percolation has parameter $p \in [1/2, \vec{p}_c]$ due to symmetry and obvious coupling with 2-dimensional directed percolation with the critical value $\vec{p}_c$. 
%Hence we may write $m = (1-\vec{p}_c)/2 < 1-p \leq 1/2 \leq p$ for all parameters $p$ in question.
	Chess-color the lattice vertices of $\mathbb{Z}^2 \times \{0\}$ such that a vertex $(x, y, 0)$ is black if and only if $x + y$ is even. For a given black vertex $(x, y, 0)$ we consider oriented paths 
	\begin{equation} \label{eq:path1}
	(x, y, 0) \to (x, y, 1) \to (x \pm 1, y, 1) \to  (x \pm 1, y, 0) 
	\end{equation}
	and 
	\begin{equation} \label{eq:path2}
	(x, y, 0) \to (x, y, 1) \to (x, y  \pm 1 , 1) \to  (x, y  \pm 1, 0). 
	\end{equation}
	
	For white vertices the construction is completely similar but all oriented paths go through the plane $\mathbb{Z}^2 \times \{-1\}$. To make the distributions for white and black vertices identical and homogeneous we will consider a slightly different percolation model where
each edge that is not in $\mathbb{Z}^2 \times \{0\}$ has the same probability $1-p < p$ of being oriented in any direction. This can be achieved by coupling in the following way: sample a countable set of independent random variables uniformly distributed on $[0,1]$ for each edge in $\mathbb{Z}^2 \times \{-1, 0, 1\}$ except the plane $\mathbb{Z}^2 \times \{0\}$. To make orientations distributed according to the original law we assign rightwards (resp. upwards) orientation to the $i$th edge if $Y_i > 1-p$ and leftwards (resp. downwards) otherwise. If we assign rightwards (resp. upwards) only if $Y_i \in [2-2p, 1-p]$ we will end up with the desired model with all orientations having the same probability that is dominated by the original one. 
	From now on we can claim that each auxiliary path is present with probability $(1-p)^3$. 
%It is also more convenient to substitute the original random orientations on the plane $\mathbb{Z}^2 \times \{0\}$ with randomly and independently %placed arcs, such that rightwards (upwards) oriented arcs are placed with the probability $p$ and leftwards(downwards) oriented arcs are placed with %the probability $1-p$. This "basic process" with the probability measure $\nu(p, 1-p)$ is equivalent to the original percolation model on %$\mathbb{Z}^2 \times \{0\}$  due to exactly the same reason as before (Proposition 2.1, \cite{Gr2}), which states that all connectivity probability %distributions depend only on the marginal probabilities.  
	
	Let us fix the point $(x, y, 0)$ for a moment and write $A_i$ for the event that the $i$-th oriented path is present where the paths of the form (\ref{eq:path1}) and (\ref{eq:path2}) emanating from $(x, y, 0)$ have been ordered in some way. We observe that  
	\begin{equation}
	\p\left(\bigcap_{i \in I} A_i\right) \geq \prod_{i \in I} \p(A_i) \label{eq:positiveness}
	\end{equation} 
	for any (finite) index set $I$. On the other hand, each event $A_i$ may be seen as an additional arc in $\mathbb{Z}^2 \times \{0\}$ oriented outwards from $(x, y, 0)$ and we end up with the probability measure $\nu$ on $\mathbb{Z}^2 \times \{0\}$ that corresponds to the set of arcs on $\mathbb{Z}^2 \times \{0\}$ enriched in this way. 
	
	Let us order the set of all possible additional arcs in $S = \mathbb{Z}^2 \times \{0\}$ in some way, say, alphabetically. Note that, for each unoriented edge, there are two arcs directed in opposite ways and we consider them separately. Given the ordered countable set of arcs we
assign a random variable $X_{s_i}$ having $X_{s_i} = 1$ if the arc $s_i$ has been added during the enrichment process described above and zero otherwise. Obviously, the random variables $X_{s_i}$ are not independent, but we are almost in the setting of Lemma \ref{lem:LSS} and it remains to show that inequality (\ref{ineq:marginal}) holds for some $\rho$. 
  
  Let us fix some $X_{s_i}$. First, we note that $X_{s_i}$ is dependent only on arcs that either emanate from or end at the same vertex as $s_i$. Thus we have only six other variables $X_{s_1}, \ldots, X_{s_6}$ upon which it depends. Moreover, due to positive association (\ref{eq:positiveness}) we have  
$$
	\p(X_{s_i} = 1| X_{s_1} = \epsilon_1, \ldots, X_{s_j} = \epsilon_j) \geq \p(X_{s_i}=1 | X_{s_1} = 0, \ldots, X_{s_6} = 0)
$$    	    
and it remains to bound the last probability from below. Without loss of generality we may assume that $s_i$ is the arc $(x, y, 0) \to (x+1, y, 0)$. For both probabilities $p_1, p_2$ for arcs $(x, y, 0) \to (x, y, 1)$ and $(x+1, y, 1) \to (x+1, y, 0)$ being present, conditionally on $X_{s_k} = 0, k = 1,...,6$, we have a lower bound $(1-p)p^3$ because horizontal arcs lying on the planes $\mathbb{Z}^2 \times \{\pm1\}$ are independent. Hence we can take $\rho = (1-p)^3p^6$, which is bounded away from zero since $p \in [1/2, \vec{p}_c]$. Thus, Lemma \ref{lem:LSS} applies and $\nu$ dominates the product measure $\pi_\rho$.
	
	But the process with the product measure $\pi_\rho$ corresponds to adding a positive density of additional arcs to $\mathbb{Z}^2 \times \{0\}$ independently at random and we immediately arrive at the setting of Theorem \ref{thm:Gr2}. Thus, the original percolation process on $\mathbb{Z}^2 \times \{-1, 0, 1\}$ is supercritical.

\end{proof} 

\section{Conclusion}
  Despite seemingly simple formulations, both Grimmett's and the $H$-model appear to be difficult to analyze, mainly because of the absence of 
Harris-FKG-type correlation inequalities. The main difficulty to overcome is that any reasonable event, such as connectivity, defined in the random-orientation model is not increasing, in contrast with usual percolation models. Note, that Reimer's inequality still holds, but usually it is less fruitful and difficult to apply. It looks probable that further progress on the models in question requires substantially new ideas or at least considerable refinements of results in classical percolation.
  
  In the proof of Theorem \ref{prop:main} we have shown how purely geometrical considerations based on rotational symmetry together with self-duality can be used even without joining paths by any kind of correlation inequalities. However, the conjecture that, for example, percolation in Grimmett's model implies percolation in the NE quadrant contains the Harris theorem (that there is no bond percolation at $\frac{1}{2}$ in $\mathbb{Z}^2$), and thus it is probably very non-trivial to prove. 
  
  The crux of Theorem \ref{prop:dGrimmett} consists of a few observations. First, as shown by Grimmett, the random-orientation process is equivalent to the independent process in which oriented arcs are placed independently. On the other hand, the process such that leftwards (resp. downwards) and rightwards (resp. upwards) orientations are independent and present with probabilities $p$ and $q$ respectively, dominates the same process with parameters $p' \leq p$ and $q' \leq q$. Thus, it is monotone with respect to $p$ and $q$ and most of the classical results apply, for example, Menshikov's exponential decay theorem. 
  
  Second, in the same paper \cite{Gr2}, Grimmett shows how self-duality (again without any correlation inequalities) together with exponential decay implies criticality (or, maybe, supercriticality) of the random-orientation model. Finally, in the proof of Theorem \ref{prop:dGrimmett} we use the general domination result to show that additional paths introduced by two copies of $\mathbb{Z}^2$, namely $\mathbb{Z}^2 \times \{-1\}$ and $\mathbb{Z}^2 \times \{1\}$, have positive density and thus the resulting process is supercritical. Again, what we actually prove is that Grimmett's model in $\mathbb{Z}^2 \times \{-1, 0, 1\}$ dominates the independent process in $\mathbb{Z}^2$ with some parameters $p'$ and $q'$ such that $p' + q' > 1$.

\section{Acknowledgments}
The author is very grateful to his supervisor Professor Peter Hegarty for proposing the question, helpful discussions and for reviewing drafts over and over again. The author also deeply thanks Professor Jeff Steif for very careful proof-reading and pointing out a mistake in an earlier version.


\begin{thebibliography}{99}
\addcontentsline{toc}{chapter}{Bibliography}
	 \bibitem[BBS]{BBS} P. Ballister, B. Bollob\'as, A. Stacey, Improved upper bounds for the critical probability on oriented percolation, Random Structures Algorithms 5 (1994), 573-589
 %	 \bibitem[GSvB]{GSvB} G. Grimmett, S. Schinazi, J. van den Berg, Dependent random graphs and spatial epidemics, Ann. Appl. Probab. 2 (1998), 317-336 
 	 \bibitem[Gr1]{Gr1} G.\ Grimmett, Percolation, Springer-Verlag, Berlin, 1989, 1st ed.
	 \bibitem[Gr2]{Gr2} G.\ Grimmett, Infinite Paths in Randomly Oriented Lattices, Random Structures Algorithms 3 (2000), 257-266
	 
 	 \bibitem[Harr]{Harr} T. E. Harris, A lower bound for the critical probability in a certain percolation process, Math. Proc. Cambridge Philos. Soc. 56 (1960), 13-20
 	 \bibitem[Heg1]{Heg1} \url{http://mathoverflow.net/questions/82369/a-percolation-problem/82718}
	 \bibitem[Hugh]{Hugh} B. D. Hughes, Random walks and random environments, Volume 2: Random Environments, Oxford University Press, Oxford, England, 1995.						
   \bibitem[Lin1]{Lin1} S.\ Linusson, A note on correlations in randomly oriented graphs, arXiv:0905.2881v2
   \bibitem[LSS]{LSS} T.\ M.\ Liggett, R.\ H.\ Schonmann, A.\ M.\ Stacey, Domination by product measures, Ann. Probab. 1 (1997), 71-95
%	 \bibitem[Mensh]{Mensh} M.\ V.\ Menshikov, Coincidence of critical points in percolation problems, Soviet Math. Dokl. 33 (1986) 856-859
	 \bibitem[Xia1]{Xia1} W.\ Xianyuan, On the random-oriented percolation, Acta Math. Sci. Ser. B Engl. Ed. 2 (2001), 265-274

\end{thebibliography}
\end{document}